\newtheorem{thm}{Theorem}[section]
\newtheorem{lemma}[thm]{Lemma}
\newtheorem{proposition}[thm]{Proposition}
\newtheorem{remark}[thm]{Remark}
\begin{document}

\title[Moutard hyperquadrics]{Moutard hyperquadrics and generalized Darboux directions}

\author[F.Py]{Fernanda Py Silva Cordeiro}

\address{Departamento de Matem\'atica-PUC-Rio, Rio de Janeiro, Brazil.}
\email{fernanda.py95@gmail.com}

\author[M. Craizer]{Marcos Craizer}

\address{Departamento de Matem\'atica-PUC-Rio, Rio de Janeiro, Brazil.}
\email{craizer@puc-rio.br}

\begin{abstract}
The higher order contact of a quadric with a surface in $3$-space at a non-degenerate point is obtained by the Moutard quadric in the Darboux direction. In this paper, we discuss the extension of this result to hypersurfaces in arbitrary dimensions. 
\end{abstract}

\keywords{Osculating quadrics, Contact with quadrics, Cubic forms, Moutard pencil of quadrics}

\thanks{This article is part of the Ph.D. thesis of the first author under the supervision of the second author. Both authors are thankful to CAPES and CNPq for financial support during the preparation of this paper.}

\subjclass{ 53A15, 53A20}

\date{April 25, 2024}

\maketitle


\section{Introduction}

The contact of a surface with some models gives insigths on the geometry of surfaces. For example, the contact with planes helps
understanding the geometry of the parabolic set and of the asymptotic directions, while the contact with spheres helps understanding the geometry at umbilical points (\cite{IFRT}). In projective geometry, it is natural to consider the contact with quadrics, or, in classical terminology, the intersection of a surface with osculating quadrics. 

Consider a non-degenerate and non-quadratic point of a surface in the $3$-dimensional space. Then there exists a $3$-parameter family of quadrics with a contact of order $2$ with the surface at the point. Among them, there are $1$-parameter families, one in the hyperbolic case and three in the elliptic case, for which the third order contact is a perfect cube. For these families, the contact of the quadric with the surface at the point is of type $E_6$, except for one value of the parameter, for which the contact is of type $E_7$. 
It turns out that these latter quadrics coincide with the classical Moutard quadrics in Darboux directions.  In this paper we shall study the generalization of Moutard quadrics in Darboux directions for hypersurfaces in arbitrary dimensions.

The Moutard quadric is a classical object of projective differential \break geometry (\cite{Buchin2},\cite{Cambraia},\cite{Green},\cite{Moutard}). 
Consider a  point $p$ of a non-degenerate surface in $M\subset\mathbb{R}^3$, and fix a direction ${\mathbf v}\in T_pM$. The one-parameter family of planes $\pi_{\lambda}$ containing ${\mathbf v}$ determine sections $M_{\lambda}$ of $M$, and we can consider
the conics $Q_{\lambda}\subset\pi_{\lambda}$ with a $4$-order contact with $M_{\lambda}$ at $p$. It turns out that the union of these conics, 
$Q=\cup_{\lambda}Q_{\lambda}$, is a quadric, called the {\it Moutard quadric} of $M$ at $p$ with direction ${\mathbf v}$. 
If we relax the condition of $4$-order contact and consider only $3$-order contact, we obtain a one-parameter family 
of quadrics $Q(\beta)$, $\beta\in\mathbb{R}$, that is called the {\it Moutard pencil of quadrics} at $p$ with direction ${\mathbf v}$. 

The contact function $\phi$ of any quadric of the Moutard pencil with the surface has vanishing $2$-jet. Moreover, there 
are three special directions ${\mathbf v}\in T_pM$, maybe two of them complex, such that the $3$-jet of $\phi$ is a perfect cube. These directions are called the {\it Darboux} directions of $p$. It is well-known that the Darboux directions are exactly the directions
of vanishing of the cubic form at $p$. 

For hypersurfaces, one can define in a similar way the Moutard hyperquadrics (\cite{Buchin}). For a point $p$
in a non-degenerate hypersurface $M\subset\mathbb{R}^{n+1}$ and ${\mathbf v}\in T_pM$, there is a $(n-1)$-parameter family
of $2$-planes $\pi_{\lambda}$, $\lambda\in\mathbb{R}^{n-1}$, containing ${\mathbf v}$. As above, we can consider
the conics $Q_{\lambda}\subset\pi_{\lambda}$ with a $4$-order contact with the section $M_{\lambda}=\pi_{\lambda}\cap M$ at $p$. The union of these conics, 
$Q=\cup_{\lambda}Q_{\lambda}$, is a hyperquadric, called the {\it Moutard hyperquadric} of $M$ at $p$ with direction ${\mathbf v}$. As in the two dimensional case, if we relax the condition of $4$-order contact and consider only $3$-order contact, we obtain a one-parameter family 
of quadrics $Q(\beta)$, $\beta\in\mathbb{R}$, that is called the {\it Moutard pencil of hyperquadrics} at $p$ with direction ${\mathbf v}$.

One can generalize the concept of Darboux directions to hypersurfaces as follows: 
For ${\mathbf v}\in T_pM$, consider a basis $\{{\mathbf v},{\mathbf v}_2,...,{\mathbf v}_n\}$ of $T_pM$ such that ${\mathbf v}_{\sigma}$ is orthogonal to ${\mathbf v}$ in the Blaschke metric at $p$, for any $\sigma\in\{2,...,n\}$.
Such a basis can be extended to a frame in a neighborhood of $p$, and we consider coordinates $(x_1,...,x_n)$ with respect to this frame. We shall say that ${\mathbf v}$ is a {\it generalized Darboux direction} if 
the $3$-jet of the contact of any Moutard hyperquadric in the Moutard pencil at $p$ in the direction ${\mathbf v}$ with the hypersurface depends only on the coordinates $(x_2,...,x_n)$.

In this paper we discuss some properties of the Moutard hyperquadrics. We prove for example that the sections of Moutard hyperquadrics coincide with the Moutard hyperquadrics of the sections of the hypersurface. Our main result is the relation between generalized Darboux directions and the cubic form $(C_{ijk})$. As a generalization of the case of surfaces, we show that
$v$ is a generalized Darboux direction if and only if $C_{1\sigma\sigma}=0$, for any $\sigma\in\{2,...,n\}$. 

The paper is organized as follows: In Section 2 we review the contact properties of Moutard quadrics. In Section 3 we discuss some properties of Moutard hyperquadrics, while in Section 4 we prove the relation between generalized Darboux directions and cubic forms. 

\section{Moutard quadrics and Darboux directions for surfaces}

In this section, we review the well-known results concerning the Moutard quadric of a non-degenerate surface $M\subset\mathbb{R}^3$ at a fixed point $p\in M$ and a fixed direction $\mathbf{v}\in T_pM$. These results can be found in, for example, \cite{Green}.
We shall assume, without loss of generality, that $p=(0,0,0)$, $\mathbf{v}=(1,0,0)$ and $T_pM$ is the plane $x_3=0$. We may also assume that the Blaschke metric $H$ at the origin is given by 
$H_{11}=1$, $H_{22}=\epsilon$, $H_{12}=0$,
where $\epsilon=\pm 1$. Under these hypothesis, the surface $M$ in a neighborhood of $p$ is given by $x_3=f(x_1,x_2)$, where
\begin{equation}\label{eq:Surface}
f(x_1,x_2)=\frac{1}{2}\left(x_1^2+\epsilon x_2^2\right)+\frac{1}{3}\sum K_{\sigma\tau\rho}x_{\sigma}x_{\tau}x_{\rho}+\frac{1}{12}\sum H_{\sigma\tau\rho\mu}x_{\sigma}x_{\tau}x_{\rho}x_{\mu}+O(5),
\end{equation}
with $1\leq \sigma,\tau,\rho,\mu \leq 2$. 

\subsection{The Moutard pencil of quadrics}

The {\it Moutard pencil of quadrics} associated to the $x_1$-axis is defined by $g=0$, where
\begin{equation}\label{eq:MoutardPencil2}
g(x_1,x_2,x_3)=-x_{3}+\frac{1}{2}\left(x_1^2+\epsilon x_2^2\right)+\frac{2}{3}K_{111}x_1x_{3}+ 2K_{112} x_{2}x_{3}+\beta x_{3}^2,\ \beta\in\mathbb{R},
\end{equation}
(\cite{Buchin2},\cite{Green}). We shall denote these quadric by $Q(\beta)$. Among these quadrics, we can distinguish the classical Moutard quadric, $Q$, 
given by 
\begin{equation}\label{eq:DefineBeta2}
\beta=\frac{1}{3}H_{1111}-\frac{8}{9}K_{111}^2.
\end{equation}

A plane $\pi=\pi_{\lambda}$ that contains the $x_1$-axis has equation $x_3=\lambda x_2$, $\lambda\in\mathbb{R}$. Denote by $\gamma=\gamma(\lambda)$
and $\eta=\eta(\lambda,\beta)$ the sections of $M$ and $Q(\beta)$, respectively, determined by $\pi$. Then, for any $\beta$ and any $\lambda$,
the contact of $\gamma$ and $\eta$ is of order $3$. Moreover, for $\beta$ given by \eqref{eq:DefineBeta2} and any $\lambda$, the contact is of order $4$. 

The contact function $\phi$ is defined as 
$$
\phi(x_1,x_2)=g(x_1,x_2,f(x_1,x_2)),
$$
where $f(x_1,x_2)$ is given by Equation \eqref{eq:Surface} and $g$ is given by Equation \eqref{eq:MoutardPencil2}.
One can verify that $\phi$ has zero $2$-jet 
and its $3$-jet $\phi_3$ is given by
\begin{equation}\label{eq:Phi32}
\phi_3(x_1,x_2)=b_{122}x_1x_2^2+b_{222}x_2^3.
\end{equation}
One can also verify that the coefficient $b_{1111}$ of $x_1^4$ in the Taylor expansion of $\phi$ at the origin is zero if and only if $\beta$ is given by \eqref{eq:DefineBeta2}.

\subsection{Darboux directions and cubic forms}

One can check that the coefficient $b_{122}$ of the contact function $\phi$ is given by 
\begin{equation}\label{eq:ContactCubic2}
b_{122}=\frac{\epsilon}{3}K_{111}-K_{122}.
\end{equation}
Note that, by Equation \eqref{eq:Phi32}, the $3$-jet $\phi_3$ of $\phi$ is a perfect cube if and only if $b_{122}=0$. We say that the $x_1$ axis is a Darboux direction of $M$ at the origin if $b_{122}=0$.

The cubic form of $M$ at the origin is given by 
\begin{equation}\label{eq:CubicForm2}
C_{111}=\frac{1}{2}K_{111}-\frac{3\epsilon}{2}K_{122}.
\end{equation}
From Equations \eqref{eq:ContactCubic2} and \eqref{eq:CubicForm2} we have that 
$$
C_{111}=\frac{3\epsilon}{2}b_{122}.
$$
We conclude that $x_1$ is a Darboux direction if and only if $C_{111}=0$. More generally, it is well-known that a direction ${\mathbf v}\in T_pM$ is a Darboux direction if and only if $C({\mathbf v},{\mathbf v},{\mathbf v})=0$.

By apolarity, the condition $C_{111}=0$ is equivalent to $C_{122}=0$. Thus the axis $x_1$ is a Darboux direction if and only if $C_{122}=0$. More generally, a direction ${\mathbf v}\in T_pM$ is a Darboux direction if and only if $C({\mathbf v},{\mathbf v}^{\perp},{\mathbf v}^{\perp})=0$, where ${\mathbf v}^{\perp}$ is orthogonal to ${\mathbf v}$ in the Blaschke metric.

\subsection{Higher order contact of quadrics with a surface}

In this section we assume that $\mathbf{v}=(1,0,0)$ is a Darboux direction. By previous section, this implies that $K_{111}=K_{122}=0$. Thus 
\begin{equation}\label{eq:Surface2}
f(x_1,x_2)=\tfrac{1}{2}\left(x_1^2+\epsilon x_2^2\right)+K_{112} x_1^2x_2+\tfrac{K_{222}}{3}x_2^3 +\tfrac{1}{12}\sum H_{\sigma\tau\rho\mu}x_{\sigma}x_{\tau}x_{\rho}x_{\mu}+O(5),
\end{equation}
It is easy to see that the $3$-parameter family of quadrics with contact of order $2$ with $M$ at the origin is given by
\begin{equation*}
x_3=\tfrac{1}{2}\left(x_1^2+\epsilon x_2^2\right)+Ex_1x_3+Fx_2x_3+\beta x_3^2.
\end{equation*}
The contact of order $3$ is given by
$$
\phi_3(x_1,x_2)=K_{112} x_1^2x_2+\tfrac{K_{222}}{3}x_2^3 -\tfrac{E}{2}x_1(x_1^2+\epsilon x_2^2)-\tfrac{F}{2}x_2(x_1^2+\epsilon x_2^2).
$$
If $E=0$, $F=2K_{112}$, then 
$$
\phi_3(x_1,x_2)=\tfrac{1}{3}(K_{222}-\epsilon K_{112})x_2^3
$$
is a perfect cube and the osculating quadric becomes
\begin{equation}\label{eq:QuadricsContactOrder2}
x_3=\tfrac{1}{2}\left(x_1^2+\epsilon x_2^2\right)+2K_{112}x_2x_3+\beta x_3^2,
\end{equation}
thus belonging to the Moutard pencil. 
Summarizing, every quadric in the Moutard pencil associated with the Darboux direction have a perfect cube as its third order contact with the surface. 

Let us consider the contact $\phi_4$ of order $4$ of the surface \eqref{eq:Surface2} with the quadric \eqref{eq:QuadricsContactOrder2}. Denote $c=\tfrac{1}{3}(K_{222}-\epsilon K_{112})$. We have that
$$
\phi_4(x_1,x_2)=cx_2^3+\tfrac{1}{12}\sum H_{\sigma\tau\rho\mu}x_{\sigma}x_{\tau}x_{\rho}x_{\mu}-2K_{112}^2x_1^2x_2^2-\tfrac{2}{3}K_{112}K_{222}x_2^4
$$
$$
-\tfrac{\beta}{4}\left( x_1^2+\epsilon x_2^2  \right)^2.
$$
Now consider a change of variables of the form $x_1=u_1$ and 
$$
x_2=u_2+\alpha u_1^2+\beta u_1u_2+\gamma u_2^2,
$$
By an appropriate choice of $\alpha$, $\beta$ and $\gamma$, the contact function can be expressed as 
$$
\phi_4(u_1,u_2)=cu_ 2^3+\tfrac{1}{3}H_{1112}u_2u_1^3+\tfrac{1}{12}\left(H_{1111}-3\beta \right)u_1^4
$$
If the coefficient of $u_2^4$ is not zero, the contact function is of type $E_6$. In other words, if the quadric is in the Moutard pencil but is not the Moutard quadric, then the contact is of type $E_6$. 
If the coefficient of $u_2^4$ is zero, $\beta=\tfrac{H_{1111}}{3}$, we are with the Moutard quadric. In this case, if we assume the generic condition $H_{1112}\neq 0$, the contact is of type $E_7$.

\section{Moutard Hyperquadrics}

Let $x=(x_1,...x_n)$ and consider a non-degenerate hypersurface defined by $x_{n+1}=f(x)$, where
\begin{equation}\label{eq:Hypersurface}
f(x)=\frac{1}{2}\sum H_{\sigma\tau}x_{\sigma}x_{\tau}+\frac{1}{3}\sum K_{\sigma\tau\rho}x_{\sigma}x_{\tau}x_{\rho}+\frac{1}{12}\sum H_{\sigma\tau\rho\mu}x_{\sigma}x_{\tau}x_{\rho}x_{\mu}+O(5).
\end{equation}
Since the hypersurface is non-degenerate, we may assume, and we shall do it, that, at $x=0$, $H_{\sigma\sigma}=\epsilon_{\sigma}$, where $\epsilon_{\sigma}=\pm 1$, and $H_{\sigma\tau}=0$, if $\sigma\neq\tau$.

\subsection{Moutard pencil of hyperquadrics}

Consider $2$-planes containing the  $x_{1}$-axis. Each such plane can be written as
\begin{equation}\label{eq:2nplane}
x_{\sigma}=\lambda_{\sigma} x_{n+1},\ \ \sigma=2,...n.
\end{equation}

\begin{lemma}\label{lemma:a2a3a4}
The projection of the section of the hypersurface \eqref{eq:Hypersurface} by the $2$-plane \eqref{eq:2nplane} in the plane $(x_1,x_{n+1})$ is given by
\begin{equation*}
x_{n+1}=a_2 x_1^2+a_3 x_1^3+a_4 x_1^4+O(5),
\end{equation*}
where 
\begin{equation*}\label{a2a3}
a_2=\frac{1}{2}\epsilon_{1},\ a_3=\frac{1}{3}K_{111},
\end{equation*}
and 
\begin{equation*}\label{a4}
a_4=\frac{1}{12}H_{1111}+\frac{\epsilon_{1}}{2}\sum_{\sigma=2}^n K_{11\sigma}\lambda_{\sigma}+
\frac{1}{8}\sum_{\sigma=2}^n \epsilon_{\sigma}\lambda_{\sigma}^2.
\end{equation*}
\end{lemma}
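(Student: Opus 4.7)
The plan is to substitute the equations $x_\sigma = \lambda_\sigma x_{n+1}$ (for $\sigma = 2,\dots,n$) defining the $2$-plane into the hypersurface equation $x_{n+1} = f(x)$ from \eqref{eq:Hypersurface}, and to invert the resulting implicit relation order by order in $x_1$. The coefficients of $x_1^2, x_1^3, x_1^4$ in the inverted series are then $a_2, a_3, a_4$.

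The key bookkeeping device is a simple order count. Any bootstrap solution satisfies $x_{n+1} = \tfrac{\epsilon_1}{2}x_1^2 + O(x_1^3)$, so after the $2$-plane substitution a monomial $x_1^a \prod_{\sigma\geq 2} x_\sigma^{c_\sigma}$ of $f$ becomes a term of order $a + 2\sum c_\sigma$ in $x_1$. Consequently, only a handful of pieces of \eqref{eq:Hypersurface} can contribute at order $\leq 4$: from the quadratic part (which is diagonal by assumption) the terms $\tfrac{\epsilon_1}{2} x_1^2$ and $\tfrac{1}{2}\bigl(\sum_{\sigma\geq 2}\epsilon_\sigma \lambda_\sigma^2\bigr) x_{n+1}^2$; from the cubic, using the symmetry of $K$, the terms $\tfrac{1}{3}K_{111}x_1^3$ and $\sum_{\mu\geq 2}K_{11\mu}\lambda_\mu\, x_1^2 x_{n+1}$ (the $x_1 x_\mu x_\nu$ and $x_\mu x_\nu x_\rho$ pieces already start at order $5$); and from the quartic only $\tfrac{1}{12}H_{1111}x_1^4$. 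Everything else is $O(5)$.

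I will then match coefficients in the truncated implicit equation
\[
x_{n+1} = \tfrac{\epsilon_1}{2}x_1^2 + \tfrac{1}{3}K_{111}x_1^3 + \Bigl(\sum_{\mu=2}^n K_{11\mu}\lambda_\mu\Bigr)x_1^2 x_{n+1} + \tfrac{1}{2}\Bigl(\sum_{\sigma=2}^n \epsilon_\sigma \lambda_\sigma^2\Bigr) x_{n+1}^2 + \tfrac{1}{12}H_{1111}x_1^4 + O(5).
\]
Orders $2$ and $3$ yield $a_2 = \tfrac{\epsilon_1}{2}$ and $a_3 = \tfrac{1}{3}K_{111}$ immediately. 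For order $4$, I substitute $x_{n+1}\mapsto \tfrac{\epsilon_1}{2}x_1^2$ into the two mixed terms, producing $\tfrac{\epsilon_1}{2}\sum_{\mu\geq 2}K_{11\mu}\lambda_\mu\cdot x_1^4$ from the $x_1^2 x_{n+1}$ piece and, using $\epsilon_1^2 = 1$, $\tfrac{1}{8}\sum_{\sigma\geq 2}\epsilon_\sigma\lambda_\sigma^2\cdot x_1^4$ from the $x_{n+1}^2$ piece. Adding $\tfrac{1}{12}H_{1111}$ gives the stated formula for $a_4$.

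No analytical difficulty is expected. The calculation is essentially an order-by-order inversion, and the only step requiring care is the combinatorial check that the discarded cubic and quartic monomials really are of order $\geq 5$ after substitution, which is settled by the order count above.
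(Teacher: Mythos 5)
Your proof is correct and is exactly the direct order-by-order substitution the paper has in mind (the paper omits the proof, stating it is ``easy and left to the reader''). The order count $x_1^a x_{n+1}^b \sim O(x_1^{a+2b})$ correctly isolates the surviving monomials, and the resulting $a_4$ is consistent with the coefficient $2\epsilon_1 K_{11\sigma}x_\sigma x_{n+1}$ and the value of $\beta$ appearing later in \eqref{eq:MoutardPencil}--\eqref{eq:BetaMoutard}, which also confirms your reading of the summation convention for $K_{\sigma\tau\rho}$.
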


\begin{lemma}\label{lemma:OsculatingConic}
Consider the planar curve
$$
x_{n+1}=a_2 x_1^2+a_3 x_1^3+a_4 x_1^4+O(5).
$$
Then the osculating conic at the origin is given by
\begin{equation*}\label{eq:PlanarOsculatingConic}
x_{n+1}=a_2x_1^2+\frac{a_3}{a_2}x_{n+1}x_1+\left( \frac{a_2a_4-a_3^2}{a_2^3}  \right)x_{n+1}^2.
\end{equation*}
\end{lemma}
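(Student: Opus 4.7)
The plan is a direct coefficient comparison. Any conic passing through the origin with tangent line equal to the $x_1$-axis (which is forced, since the curve has no constant or linear term and its lowest order term is $a_2 x_1^2$) can be written in the affine form
\begin{equation*}
x_{n+1} = A x_1^2 + B x_1 x_{n+1} + C x_{n+1}^2,
\end{equation*}
giving a three-parameter family. Requiring contact of order $4$ at the origin with the given curve amounts to three linear conditions on $(A,B,C)$, which I expect to determine these coefficients uniquely.

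First I would substitute the Taylor expansion $x_{n+1} = a_2 x_1^2 + a_3 x_1^3 + a_4 x_1^4 + O(5)$ into the right-hand side of the conic equation and expand modulo $O(5)$, obtaining
\begin{equation*}
A x_1^2 + B a_2 x_1^3 + (B a_3 + C a_2^2) x_1^4 + O(5).
\end{equation*}
Equating this with the left-hand side $a_2 x_1^2 + a_3 x_1^3 + a_4 x_1^4 + O(5)$ yields, in order of increasing degree, $A = a_2$ (from the $x_1^2$ coefficient), then $B = a_3/a_2$ (from the $x_1^3$ coefficient, using $a_2 \neq 0$ from non-degeneracy), and finally $C = (a_4 - a_3^2/a_2)/a_2^2 = (a_2 a_4 - a_3^2)/a_2^3$ (from the $x_1^4$ coefficient).

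This yields exactly the conic in the statement. There is no real obstacle: the only subtlety is to observe that the osculating conic must be tangent to the $x_1$-axis, so no $x_1$-linear or constant terms appear, and that $a_2 = \tfrac{1}{2}\epsilon_1 \neq 0$ by the non-degeneracy assumption on the hypersurface, which makes the successive divisions legitimate.
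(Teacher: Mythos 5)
Your proof is correct: substituting the curve's expansion into the three-parameter family of conics through the origin tangent to the $x_1$-axis and matching coefficients of $x_1^2$, $x_1^3$, $x_1^4$ determines $A$, $B$, $C$ exactly as in the statement, with $a_2=\tfrac12\epsilon_1\neq 0$ justifying the divisions. The paper leaves this lemma's proof to the reader, and your computation is precisely the straightforward argument it has in mind.
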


\smallskip\noindent
The proofs of Lemmas \ref{lemma:a2a3a4} and \ref{lemma:OsculatingConic} are easy and we left them to the reader.
From these lemmas, we obtain the equation of the osculating conic of the section of the hypersurface \eqref{eq:Hypersurface} by the $2$-plane \eqref{eq:2nplane}. We then eliminate $\lambda_{\sigma}$ in this equation using Equations \eqref{eq:2nplane} to obtain the equation $g(x,x_{n+1})=0$, where
\begin{equation}\label{eq:MoutardPencil}
g=-x_{n+1}+\tfrac{1}{2}\sum_{\sigma=1}^n \epsilon_{\sigma} x_{\sigma}^2+\tfrac{2}{3}K_{111}\epsilon_1x_1x_{n+1}+
2\epsilon_{1}\sum_{\sigma=2}^n K_{11\sigma}x_{\sigma} x_{n+1}+
\beta x_{n+1}^2,
\end{equation}
with
\begin{equation}\label{eq:BetaMoutard}
\beta=\tfrac{1}{9}\left( 3H_{1111}-8\epsilon_1K_{111}^2\right).
\end{equation}

\smallskip\noindent
The one-parameter family of hyperquadrics defined by Equation \eqref{eq:MoutardPencil} is called the {\it Moutard pencil} of hyperquadrics (\cite{Buchin}).
Among the hyperquadrics of the Moutard pencil, we distinguish the {\it Moutard hyperquadric}, obtained with $\beta$
given by Equation \eqref{eq:BetaMoutard}.

\subsection{Contact properties}

The contact function $\phi$ is defined as 
$$
\phi(x)=g(x,f(x)),
$$
where $f(x)$ is given by Equation \eqref{eq:Hypersurface}. It is not difficult to see that the $2$-jet of $\phi$ at the origin is zero.

Denote $\bar{x}=(x_2,...,x_n)$. The $3$-jet $\phi_3$ of $\phi$ can be written as 
$$
\phi_3(x_1,\bar{x})=x_1q(x)+p_3(\bar{x}),
$$
where $q(x)$ is quadratic and $p_3(\bar{x})$ cubic. We write
$$
q(x)= b_{111}x_1^2+\sum_{\sigma=2}^n b_{11\sigma}x_1x_{\sigma} +\sum_{\sigma=2}^nb_{1\sigma\sigma}x_{\sigma}^2 +\sum_{\sigma\neq\tau} b_{1\sigma\tau}x_{\sigma}x_{\tau}
$$
It is not difficult to verify that $b_{111}=0$ and $b_{11\sigma}=0$, for any $2\leq\sigma\leq n$.
Moreover, 
\begin{equation}\label{eq:ContactCubic}
b_{1\sigma\sigma}=\frac{\epsilon_1\epsilon_{\sigma}}{3}K_{111}-K_{1\sigma\sigma},\ \ b_{1\sigma\tau}=-2K_{1\sigma\tau}.
\end{equation}
We have thus showed that 
\begin{equation}\label{eq:quadratic}
q(x)=\sum_{\sigma=2}^nb_{1\sigma\sigma}x_{\sigma}^2 +\sum_{\sigma\neq\tau} b_{1\sigma\tau}x_{\sigma}x_{\tau}, 
\end{equation}
with $b_{1\sigma\sigma}$ and $b_{1\sigma\tau}$ given by Equations \eqref{eq:ContactCubic}. We say that the $x_1$-axis is a {\it generalized Darboux direction} if $q(x)=0$, or equivalently, $b_{1\sigma\sigma}=b_{1\sigma\tau}=0$.

The coefficient $b_{1111}$ of $x_1^4$ in the Taylor expansion of $\phi$ is given by
$$
b_{1111}=-\tfrac{1}{12}H_{1111}+\tfrac{2}{9}\epsilon_1K_{111}^2+\tfrac{1}{4}\beta=0.
$$
We conclude that $b_{1111}=0$ if and only if $\beta$ is given by Equation \eqref{eq:BetaMoutard}, which means that
we have chosen the Moutard hyperquadric among the hyperquadrics of the Moutard pencil.

\subsection{Sectional property}

For a fixed $\bar\lambda=(\lambda_{k},...,\lambda_n)$, denote by $\pi_{\bar\lambda}$ the $k$-space defined by
\begin{equation}\label{eq:kspace}
x_{\sigma}=\lambda_{\sigma}x_{n+1}, \ \ \sigma=k,...n, 
\end{equation}
containing the $x_1$-axis and denote by $M_{\bar\lambda}$ the intersection of $\pi_{\bar\lambda}$ with the hypersurface $M$. A $2$-plane in the $k$-space defined by \eqref{eq:kspace} containing the $x_1$-axis is defined by the parameters 
$\lambda_0=(\lambda_2,...,\lambda_{k-1})$, through the equations
\begin{equation*}\label{eq:2planek}
x_{\sigma}=\lambda_{\sigma}x_{n+1}, \ \ \sigma=2,...k-1.
\end{equation*}
The same $2$-plane in $\mathbb{R}^{n+1}$ is defined by $\lambda=\lambda_0\cup\bar\lambda$  through Equations
\eqref{eq:2nplane}.

\begin{proposition}
The Moutard quadric of $M_{\bar\lambda}$ in the direction $x_1$ coincides with the intersection of the Moutard hyperquadric of $M$ in the direction $x_1$ with the $k$-space $\pi_{\bar\lambda}$. 
\end{proposition}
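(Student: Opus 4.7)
The plan is to exploit the defining characterization of the Moutard hyperquadric as the union of the $4$-order contact conics in all $2$-plane sections through the fixed direction. Write $Q$ for the Moutard hyperquadric of $M$ in direction $x_1$ and $Q'$ for the Moutard quadric of $M_{\bar\lambda}$ in direction $x_1$; the claim is $Q' = Q \cap \pi_{\bar\lambda}$.

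The starting observation is that every $2$-plane inside $\pi_{\bar\lambda}$ containing the $x_1$-axis is of the form $P = \pi_{\lambda_0 \cup \bar\lambda}$ for some $\lambda_0 = (\lambda_2, \ldots, \lambda_{k-1})$, via $\lambda = \lambda_0 \cup \bar\lambda$ in \eqref{eq:2nplane}, and it is precisely the $2$-plane of $\pi_{\bar\lambda}$ that the parameter $\lambda_0$ selects in building $Q'$. Because $P \subseteq \pi_{\bar\lambda}$, we have $P \cap M_{\bar\lambda} = P \cap \pi_{\bar\lambda} \cap M = P \cap M$, so the osculating conic with $4$-order contact at the origin is the same curve $C_{\lambda_0 \cup \bar\lambda}$ whether it is read as a section of $M$ or of $M_{\bar\lambda}$. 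This immediately gives
$$
Q' \;=\; \bigcup_{\lambda_0} C_{\lambda_0 \cup \bar\lambda} \;\subseteq\; \bigcup_{\lambda} C_{\lambda} \;=\; Q,
$$
together with $Q' \subseteq \pi_{\bar\lambda}$, hence $Q' \subseteq Q \cap \pi_{\bar\lambda}$.

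For the reverse inclusion I would pick $p \in Q \cap \pi_{\bar\lambda}$ with $x_{n+1}(p) \neq 0$ and choose $\mu$ with $p \in C_\mu \subseteq \pi_\mu$; combined with $p \in \pi_{\bar\lambda}$ this forces $\mu_\sigma = \bar\lambda_\sigma$ for $\sigma \geq k$, so $\mu = \mu_0 \cup \bar\lambda$ and $p \in C_{\mu_0 \cup \bar\lambda} \subseteq Q'$. Since the linear part of \eqref{eq:MoutardPencil} is $-x_{n+1}$, the hyperplane $\{x_{n+1}=0\}$ cuts each of $Q'$ and $Q \cap \pi_{\bar\lambda}$ in a proper subvariety, so the pointwise identity on the dense open set $\{x_{n+1}\neq 0\}$ extends by Zariski closure. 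The step that most deserves care is exactly this reverse inclusion, where one must keep track of the dense open locus. As a purely algebraic cross-check that sidesteps the closure argument, substituting $x_\sigma = \lambda_\sigma x_{n+1}$ for $\sigma \geq k$ into \eqref{eq:MoutardPencil} produces an $x_{n+1}^2$-coefficient that matches exactly the value $\tfrac{1}{9}(3\tilde H_{1111} - 8\epsilon_1 K_{111}^2)$ assigned by \eqref{eq:BetaMoutard} to $M_{\bar\lambda}$, with $\tilde H_{1111}$ read off from Lemma \ref{lemma:a2a3a4}; the cubic coefficients on indices in $\{1,\ldots,k-1\}$ are unchanged by the section because $x_{n+1}$ vanishes to second order in the base coordinates of $\pi_{\bar\lambda}$.
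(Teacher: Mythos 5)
Your argument is correct and follows essentially the same route as the paper's: both rest on the fact that a $2$-plane $P\subseteq\pi_{\bar\lambda}$ through the $x_1$-axis cuts $M$ and $M_{\bar\lambda}$ in the same curve, so the unique conic with $4$-order contact is the same whether computed for $M$ or for $M_{\bar\lambda}$, and both quadrics are the unions of these common conics. Your extra Zariski-closure step handling the locus $x_{n+1}=0$, where the fibration by $2$-planes degenerates, is a point the paper passes over silently; it is added care rather than a different method.
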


\begin{proof}
The intersection of a $2$-plane $\pi_{\lambda_0}$ contained in the $k$-space defined by Equations \eqref{eq:kspace} and containing the $x_1$-axis with the Moutard quadric of $M_{\bar\lambda}$ has a $4$-order contact with the corresponding section of $M_{\bar\lambda}$. Similarly, the intersection of a section defined by $\lambda$ containing the $x_1$-axis with the Moutard hyperquadric of $M$ has a $4$-order contact with the corresponding section of $M$. Thus both conics coincide and the proposition is proved. 
\end{proof}

\section{Generalized Darboux Directions}

Consider a non-degenerate hypersurface defined by $x_{n+1}=f(x)$, where $f(x)$ is given by Equation \eqref{eq:Hypersurface}.

\subsection{Cubic forms}

\begin{proposition}\label{prop:CubicForm}
The cubic form of the hypersurface \eqref{eq:Hypersurface} at the origin is given by $C_{\sigma\tau\rho}x_{\sigma}x_{\tau}x_{\rho}$, where
\begin{equation*}\label{Cubic1}
C_{\sigma\sigma\sigma}=\frac{2(n-1)}{n+2}K_{\sigma\sigma\sigma}-\frac{6}{n+2}\left( \sum_{\tau}\epsilon_{\sigma}\epsilon_{\tau}K_{\sigma\tau\tau} \right).
\end{equation*}
\begin{equation*}\label{Cubic2}
C_{\sigma\tau\tau}=\frac{2(n+1)}{n+2}K_{\sigma\tau\tau}-\frac{2\epsilon_{\sigma}\epsilon_{\tau}}{n+2}K_{\sigma\sigma\sigma}
-\frac{2\epsilon_{\tau}}{n+2}\sum_{\rho}\epsilon_{\rho}K_{\sigma\rho\rho}.
\end{equation*}
\begin{equation*}\label{Cubic3}
C_{\sigma\tau\rho}=2K_{\sigma\tau\rho}, 
\end{equation*}
where $\sigma\neq\tau\neq\rho$.
Observe that the apolarity condition 
\begin{equation*}\label{apolarity}
\epsilon_{\sigma}C_{\sigma\sigma\sigma}+\sum_{\tau}\epsilon_{\tau}C_{\sigma\tau\tau}=0,
\end{equation*}
holds for any $\sigma$.
\end{proposition}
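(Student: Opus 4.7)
The plan is to exploit the standard representation of the Fubini--Pick cubic form of a graph hypersurface $x_{n+1}=f(x)$ in the Blaschke equiaffine normalization. The content is that $C_{\sigma\tau\rho}$ is the $H$-trace-free symmetric completion of the third-derivative tensor of $f$; explicitly, at a point where the Hessian coincides with the Blaschke metric, one has
\begin{equation*}
C_{\sigma\tau\rho}=f_{\sigma\tau\rho}-\tfrac{1}{n+2}\bigl(H_{\sigma\tau}\Phi_{\rho}+H_{\sigma\rho}\Phi_{\tau}+H_{\tau\rho}\Phi_{\sigma}\bigr),
\end{equation*}
where $\Phi_{\rho}=\partial_{\rho}\log|\det(f_{ij})|=H^{ij}\partial_{\rho}H_{ij}$. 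I would open by recalling this formula, or, if one prefers a self-contained account, by characterizing $C_{\sigma\tau\rho}$ as the unique symmetric $H$-apolar $3$-tensor of the form $2K_{\sigma\tau\rho}+H_{(\sigma\tau}W_{\rho)}$: solving for $W$ by contracting with $H^{\sigma\tau}$ recovers exactly the displayed correction.

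With this in hand, I would substitute the Taylor expansion \eqref{eq:Hypersurface}. Reading off $f_{\sigma\tau\rho}(0)=2K_{\sigma\tau\rho}$ and, using $H^{ij}(0)=\epsilon_{i}\delta^{ij}$,
\begin{equation*}
\Phi_{\rho}(0)=\sum_{\sigma}\epsilon_{\sigma}\partial_{\rho}f_{\sigma\sigma}(0)=2\sum_{\sigma}\epsilon_{\sigma}K_{\rho\sigma\sigma}=:2V_{\rho},
\end{equation*}
the general formula collapses to the master identity
\begin{equation*}
C_{\sigma\tau\rho}=2K_{\sigma\tau\rho}-\tfrac{2}{n+2}\bigl(H_{\sigma\tau}V_{\rho}+H_{\sigma\rho}V_{\tau}+H_{\tau\rho}V_{\sigma}\bigr).
\end{equation*}

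The three formulas in the proposition then fall out by a routine case analysis on coincidences among $\sigma,\tau,\rho$, using the diagonality of $H$ at the origin. If the indices are all distinct, every $H$-term vanishes and only $C_{\sigma\tau\rho}=2K_{\sigma\tau\rho}$ remains. If $\sigma\neq\tau=\rho$, only $H_{\tau\tau}=\epsilon_{\tau}$ contributes: peeling off the $\rho=\sigma$ summand of $V_{\sigma}$ yields the explicit $K_{\sigma\sigma\sigma}$ term, while absorbing the $\rho=\tau$ summand into the coefficient of $K_{\sigma\tau\tau}$ turns $2$ into $\tfrac{2(n+1)}{n+2}$. For $\sigma=\tau=\rho$, all three $H$-terms contribute $\epsilon_{\sigma}V_{\sigma}$; rewriting $\epsilon_{\sigma}V_{\sigma}=\sum_{\tau}\epsilon_{\sigma}\epsilon_{\tau}K_{\sigma\tau\tau}$ and absorbing the $\tau=\sigma$ summand likewise turns the coefficient of $K_{\sigma\sigma\sigma}$ into $\tfrac{2(n-1)}{n+2}$. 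The apolarity relation is automatic from the master identity, since contracting on the first two indices with $\epsilon_{\sigma}\delta^{\sigma\tau}$ gives $\sum_{\sigma}\epsilon_{\sigma}C_{\sigma\sigma\rho}=2V_{\rho}-\tfrac{2}{n+2}(nV_{\rho}+2V_{\rho})=0$.

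The only step that is not mechanical bookkeeping is the opening identification of $C$ with the trace-corrected third-derivative tensor; once that is accepted, everything else is linear algebra. If instead one takes the apolarity-based characterization above as a working definition, the proof becomes entirely self-contained.
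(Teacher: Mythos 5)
Your proposal is correct, and it reaches the paper's formulas by a genuinely different route. The paper works from first principles out of the Nomizu--Sasaki graph-immersion formulas: it computes the Blaschke normal $\xi=\phi\xi_0+\sum Z_\sigma\psi_{x_\sigma}$ by solving $[D^2f]Z=-\nabla\phi$, reads off the induced connection, and then evaluates $C_{\sigma\tau\rho}=\partial_\rho h_{\sigma\tau}-h(\nabla_{X_\rho}X_\sigma,X_\tau)-h(\nabla_{X_\rho}X_\tau,X_\sigma)$ case by case. You instead start from the closed-form trace-corrected expression $C_{\sigma\tau\rho}=f_{\sigma\tau\rho}-\tfrac{1}{n+2}(H_{\sigma\tau}\Phi_\rho+H_{\sigma\rho}\Phi_\tau+H_{\tau\rho}\Phi_\sigma)$ and reduce everything to the single master identity with $V_\rho=\sum_\sigma\epsilon_\sigma K_{\rho\sigma\sigma}$; the two are the same computation in disguise, since the paper's $Z_\sigma$ equals $-\tfrac{1}{n+2}\epsilon_\sigma\Phi_\sigma=-\tfrac{2}{n+2}\epsilon_\sigma V_\sigma$. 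What your packaging buys is uniformity (one formula covering all three index patterns) and a one-line, manifestly automatic apolarity check, which in the paper is only asserted; it also makes transparent which terms of the sums in the proposition's displayed formulas are meant to be excluded. What it costs is that the opening formula is quoted rather than derived, and your proposed self-contained substitute --- characterizing $C$ as the unique $H$-apolar tensor of the form $2K_{\sigma\tau\rho}+H_{(\sigma\tau}W_{\rho)}$ --- silently uses the fact that a tangential change of the transversal field shifts the cubic form by exactly such a symmetrized $H\odot W$ term (and that the rescaling by $\phi$ contributes only through $\Phi$); that fact is precisely what the paper's explicit computation of $\xi$ and $\nabla$ establishes. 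So your argument is sound, but to be fully self-contained it would need either a citation for the trace-corrected formula or a short verification of that transformation rule.
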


For a proof of this proposition, see Appendix.

\subsection{Generalized Darboux directions}

By the apolarity condition, the $n$ conditions $C_{1\sigma\sigma}=0$, $\sigma=1,...,n$, are redundant, and we need to consider only $(n-1)$ among them. Next lemma is elementary:   

\begin{lemma}\label{lemma:bc1}
The linear system of $(n-1)$ equations $b_{1\sigma\sigma}=0$ in the $(n-1)$ variables $K_{1\sigma\sigma}$,
is equivalent to the system $C_{1\sigma\sigma}=0$, $\sigma=2,...,n$. 
\end{lemma}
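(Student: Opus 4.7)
The plan is to treat $K_{111}$ as a fixed parameter and view both the $b$-system and the $C$-system as $(n-1)\times(n-1)$ linear systems in the unknowns $K_{1\sigma\sigma}$, $\sigma=2,\ldots,n$. By Equation \eqref{eq:ContactCubic}, the $b$-system is already diagonal and has the obvious unique solution
\[
K_{1\sigma\sigma}=\tfrac{\epsilon_1\epsilon_\sigma}{3}K_{111},\qquad \sigma=2,\ldots,n.
\]
Establishing the equivalence therefore reduces to two tasks: (i) verify that this tuple satisfies $C_{1\sigma\sigma}=0$ for every $\sigma$, and (ii) show the $C$-system has a unique solution.

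For (i), I would substitute the tuple into the formula for $C_{1\sigma\sigma}$ from Proposition \ref{prop:CubicForm}. The sum $\sum_\rho \epsilon_\rho K_{1\rho\rho}$ that appears there reduces to a multiple of $\epsilon_1 K_{111}$ whose proportionality constant counts the summation indices; factoring the common $\tfrac{2\epsilon_1\epsilon_\sigma K_{111}}{n+2}$ out of the three terms leaves the arithmetic identity
\[
\tfrac{n+1}{3}-1-\tfrac{n-2}{3}=0,
\]
which confirms $C_{1\sigma\sigma}=0$.

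For (ii), introduce $y_\sigma:=\epsilon_\sigma K_{1\sigma\sigma}$ for $\sigma=2,\ldots,n$. Multiplying each equation $C_{1\sigma\sigma}=0$ through by $\tfrac{n+2}{2}\epsilon_\sigma$ and collecting the diagonal contribution, the system takes the decoupled form
\[
(n+2)\,y_\sigma \;=\; \epsilon_1 K_{111}+T,\qquad T:=\sum_{\rho=2}^{n}y_\rho.
\]
Since the right-hand side does not depend on $\sigma$, all $y_\sigma$ are equal; summing across $\sigma$ then yields $T=\tfrac{(n-1)\epsilon_1 K_{111}}{3}$, and back-substitution gives the unique value $y_\sigma=\tfrac{\epsilon_1 K_{111}}{3}$, i.e.\ $K_{1\sigma\sigma}=\tfrac{\epsilon_1\epsilon_\sigma K_{111}}{3}$. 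This is precisely the tuple from step (i), so the two solution sets coincide.

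The computations are routine; the only point requiring a little care is the index convention in the summations of Proposition \ref{prop:CubicForm} (consistency with the stated apolarity relation fixes the ranges unambiguously). I do not anticipate a serious obstacle, consistent with the paper's description of the lemma as elementary.
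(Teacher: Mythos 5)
Your proposal is correct; I checked both steps. The sum in $C_{1\sigma\sigma}$ does run over $\rho\notin\{1,\sigma\}$ (this is what the appendix derivation of $Z_\tau$ gives, and it is the range that makes the stated apolarity identity hold), so after substituting $K_{1\rho\rho}=\tfrac{\epsilon_1\epsilon_\rho}{3}K_{111}$ it contributes $n-2$ identical terms and your identity $\tfrac{n+1}{3}-1-\tfrac{n-2}{3}=0$ is exactly what comes out; likewise $(n+2)y_\sigma=\epsilon_1K_{111}+T$ forces $T=\tfrac{(n-1)\epsilon_1K_{111}}{3}$ and $y_\sigma=\tfrac{\epsilon_1K_{111}}{3}$. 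Your forward direction coincides in substance with the paper's, which obtains each $C_{1\tau\tau}=0$ as an explicit linear combination of the equations $b_{1\sigma\sigma}=0$ (the combination with weights $-(n+1)$ and $\epsilon_\tau\epsilon_\rho$ amounts to your substitution). The converse is where you genuinely diverge: the paper invokes apolarity to adjoin $C_{111}=0$ to the system and then eliminates between $C_{111}=0$ and $C_{1\sigma\sigma}=0$ to recover $b_{1\sigma\sigma}=0$, whereas you prove directly that the $C$-system is nonsingular (all $y_\sigma$ equal, hence determined), so its unique solution must be the one already exhibited. Your route avoids the apolarity relation entirely and sidesteps the paper's rewriting of $C_{111}=0$ and $C_{1\sigma\sigma}=0$, which as printed contains typographical slips in the coefficients; the paper's route is marginally more constructive in that it displays the eliminating combinations, but otherwise the two arguments deliver the same content.
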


\begin{proof}
Assume first that $b_{1\sigma\sigma}=0$, $\sigma=2,...,n$. Then
\begin{equation}\label{eq:1}
\frac{1}{3}\epsilon_1\epsilon_{\sigma}K_{111}-K_{1\sigma\sigma}=0.
\end{equation}
Fix $\tau\in\{2,..,n\}$. For $\sigma=\tau$, multiply Equation \eqref{eq:1} by $-(n+1)$. For $\sigma=\rho$, $\rho\in\{2,...,n\}$, $\rho\neq\tau$, multiply 
Equation \eqref{eq:1} by $\epsilon_{\tau}\epsilon_{\rho}$ and then sum all to obtain
$$
(n+1)K_{1\tau\tau}-\epsilon_1\epsilon_{\tau}K_{111}-\epsilon_{\tau}\sum_{\rho\neq\tau}\epsilon_{\rho}K_{1\rho\rho}=0,
$$
proving that $C_{1\tau\tau}=0$. Conversely, assume that $C_{1\sigma\sigma}=0$, for $1\leq\sigma\leq n$. The equation $C_{111}=0$ can be written as
\begin{equation}\label{eq:C1110}
K_{111}=\sum_{\tau=2}^n\epsilon_1\epsilon_{\tau}K_{1\tau\tau}=0.
\end{equation}
On the other hand, the Equation  $C_{1\sigma\sigma}=0$ can be written as
\begin{equation}\label{eq:C1SigmaSigma0}
\epsilon_1\epsilon_{\sigma}K_{111}=5K_{1\sigma\sigma}-\sum_{\tau\neq\sigma}^n\epsilon_{\sigma}\epsilon_{\tau}K_{1\tau\tau}=0.
\end{equation}
Multiplying Equation \eqref{eq:C1110} by $\epsilon_{\sigma}$, Equation \eqref{eq:C1SigmaSigma0} by $\epsilon_{1}$ and summing we obtain
$$
2\epsilon_{\sigma}K_{111}=6\epsilon_1K_{1\sigma\sigma},
$$
thus proving that $b_{1\sigma\sigma}=0$. 
\end{proof}

\begin{lemma}\label{lemma:bc2}
The linear system of $\tfrac{n(n-1)}{2}$ equations $b_{1\sigma\tau}=0$ in the $\tfrac{n(n-1)}{2}$ variables $K_{1\sigma\tau}$,
is equivalent to the system $C_{1\sigma\tau}=0$, $\sigma,\tau=2,...,n$. 
\end{lemma}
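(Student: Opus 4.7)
My plan is to exploit the decoupling between the $(n-1)$ diagonal equations (with $\sigma=\tau$) and the $\binom{n-1}{2}$ off-diagonal equations (with $\sigma\neq\tau$), which together account for the stated total of $\tfrac{n(n-1)}{2}$ equations in the corresponding variables $K_{1\sigma\tau}$ with $2\leq \sigma\leq\tau\leq n$. Inspection of \eqref{eq:ContactCubic} and Proposition \ref{prop:CubicForm} shows that the diagonal equations $b_{1\sigma\sigma}=0$ and $C_{1\sigma\sigma}=0$ involve only the diagonal variables $K_{1\rho\rho}$, while the off-diagonal equations involve only the corresponding single off-diagonal variable $K_{1\sigma\tau}$. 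Hence the full linear system splits into two independent blocks, which I will treat separately.

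For the off-diagonal block the equivalence is immediate. When $\sigma\neq\tau$ with $\sigma,\tau\in\{2,\ldots,n\}$, the indices $1,\sigma,\tau$ are three distinct numbers, so the last formula of Proposition \ref{prop:CubicForm} gives $C_{1\sigma\tau}=2K_{1\sigma\tau}$, while \eqref{eq:ContactCubic} gives $b_{1\sigma\tau}=-2K_{1\sigma\tau}$. Each such equation is therefore equivalent to $K_{1\sigma\tau}=0$, and the $\binom{n-1}{2}$ off-diagonal $b$-equations match the off-diagonal $C$-equations term-by-term.

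For the diagonal block the required statement is exactly Lemma \ref{lemma:bc1}, which asserts that the $(n-1)$ equations $b_{1\sigma\sigma}=0$ in the variables $K_{1\sigma\sigma}$ are equivalent to $C_{1\sigma\sigma}=0$ for $\sigma=2,\ldots,n$. I would simply cite it. Combining the two block equivalences then yields the equivalence of the full $\tfrac{n(n-1)}{2}$-equation linear system, as claimed.

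There is no serious obstacle: the non-trivial algebra was already carried out in Lemma \ref{lemma:bc1}, and the off-diagonal part is a one-line tautology because on triply-distinct indices the cubic form is a constant multiple of the corresponding $K$-coefficient. The only point to verify is that the two blocks truly decouple, which is visible from the three formulas of Proposition \ref{prop:CubicForm}: the formula for $C_{\sigma\tau\rho}$ with three distinct indices does not involve any $K$ with repeated indices, and the formulas for $C_{\sigma\sigma\sigma}$ and $C_{\sigma\tau\tau}$ only involve $K$'s of the form $K_{\sigma\sigma\sigma}$ or $K_{\sigma\tau\tau}$.
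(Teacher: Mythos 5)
Your proposal is correct and follows essentially the same route as the paper: the paper's proof likewise observes that for $\sigma\neq\tau$ both $b_{1\sigma\tau}=0$ and $C_{1\sigma\tau}=0$ reduce to $K_{1\sigma\tau}=0$, and then invokes Lemma \ref{lemma:bc1} for the diagonal part. Your write-up merely makes the block decoupling and the count $\tfrac{n(n-1)}{2}=(n-1)+\binom{n-1}{2}$ explicit, which the paper leaves implicit.
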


\begin{proof}
If $\sigma\neq\tau$, $C_{1\sigma\tau}=0$ and $b_{1\sigma\tau}=0$ are equivalent to $K_{1\sigma\tau}=0$. The result now follows from Lemma \ref{lemma:bc1}. 
\end{proof}

Lemma \ref{lemma:bc2} implies the main result of the paper:

\begin{proposition}
The $x_1$ axis is a generalized Darboux direction if and only if $C_{1\sigma\tau}=0$, for $2\leq \sigma,\tau\leq n$.
\end{proposition}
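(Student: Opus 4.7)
The plan is to derive the proposition directly from the definition of a generalized Darboux direction together with Lemmas \ref{lemma:bc1} and \ref{lemma:bc2}, so very little new work is needed.

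First, I would unwind the definition. By the discussion following Equation \eqref{eq:quadratic}, the $x_1$-axis is a generalized Darboux direction precisely when the quadratic form $q(x)$ appearing in the $3$-jet $\phi_3(x_1,\bar{x})=x_1 q(x)+p_3(\bar{x})$ vanishes identically. Since $q(x)$ is expressed in \eqref{eq:quadratic} as a combination of the monomials $x_{\sigma}^2$ (with coefficients $b_{1\sigma\sigma}$) and $x_{\sigma}x_{\tau}$ for $\sigma\neq\tau$ (with coefficients $b_{1\sigma\tau}$), the condition $q\equiv 0$ is equivalent to the simultaneous vanishing of all the coefficients $b_{1\sigma\sigma}$ and $b_{1\sigma\tau}$ for $\sigma,\tau\in\{2,\ldots,n\}$.

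Next, I would apply Lemma \ref{lemma:bc2}, which asserts exactly that the full system $b_{1\sigma\tau}=0$ (for $2\le\sigma,\tau\le n$, including the diagonal indices via Lemma \ref{lemma:bc1}) is equivalent to the system $C_{1\sigma\tau}=0$ for $2\le\sigma,\tau\le n$. Combining these two steps yields the claimed equivalence.

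The proof is therefore essentially a concatenation of prior statements; there is no real obstacle beyond citing the right results in the right order. The only point to be careful about is that the diagonal conditions $b_{1\sigma\sigma}=0$ must be handled via Lemma \ref{lemma:bc1} (which in turn uses the apolarity condition of Proposition \ref{prop:CubicForm} to pass from $n-1$ independent equations to the $n$ equations $C_{1\sigma\sigma}=0$), while the off-diagonal conditions reduce immediately to $K_{1\sigma\tau}=0$ from the formula $b_{1\sigma\tau}=-2K_{1\sigma\tau}=-C_{1\sigma\tau}$ in \eqref{eq:ContactCubic}. Once these two pieces are assembled, the proposition follows in one line.
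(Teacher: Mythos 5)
Your proof is correct and follows essentially the same route as the paper: unwind the definition of a generalized Darboux direction via the vanishing of the quadratic form $q$ in Equation \eqref{eq:quadratic}, then invoke Lemma \ref{lemma:bc2} (which subsumes Lemma \ref{lemma:bc1} for the diagonal coefficients). The extra detail you give about how the diagonal and off-diagonal conditions are handled is accurate and consistent with the paper's argument.
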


\begin{proof}
Observe that, by Equation \eqref{eq:quadratic}, the $3$-jet of the contact function $\phi$ is independent of $x_1$ if and only if $b_{1\sigma\tau}=0$, for $2\leq \sigma,\tau\leq n$. Now the result follows directly from Lemma \ref{lemma:bc2}.
\end{proof}

\begin{remark} 
The above proposition shows an important difference between surfaces and higher dimensional hypersurfaces: Generically, 
the generalized Darboux directions exist only at a submanifold of the hypersurface $M^n$. For example, if $n=3$, the generalized Darboux directions exist only at a surface $M^2\subset M^3$.  
\end{remark}







\section{Appendix: The cubic form}\label{appendix}

In this appendix we prove Proposition \ref{prop:CubicForm}: Denote $\xi_0=(0,1)$ and $\epsilon=\Pi_{\sigma=1}^n\epsilon_{\sigma}$. Following \cite[p.45]{Nomizu}, the affine Blaschke normal of 
$$
\psi(x)=(x,f(x))
$$
is given by 
$$
\xi=\phi\xi_0+Z_1\psi_{x_1}+Z_2\psi_{x_2}+...+Z_n\psi_{x_n},
$$
where $\phi^{n+2}=|\det(D^2f)|=\epsilon\det(D^2f)$ and $Z_i$, $1\leq i\leq n$, is given by
\[
\left[D^2f\right]\cdot
\left[
\begin{array}{c}
Z_1\\ Z_2\\ ...\\ Z_n
\end{array}
\right]=
-\left[
\begin{array}{c}
\phi_{x_1}\\ \phi_{x_2}\\ ...\\ \phi_{x_n}
\end{array}
\right].
\]
Thus, at the origin, 
\[
\left[
\begin{array}{c}
\epsilon_1Z_1\\ \epsilon_2Z_2\\ ...\\ \epsilon_nZ_n
\end{array}
\right]=
-\frac{2}{n+2}\left[
\begin{array}{c}
\epsilon_1K_{111}+\epsilon_2K_{122}+...+\epsilon_nK_{1nn}\\ 
\epsilon_1K_{112}+\epsilon_2K_{222}+...+\epsilon_nK_{2nn}\\ ...\\ 
\epsilon_1K_{11n}+\epsilon_2K_{22n}+...+\epsilon_nK_{nnn}
\end{array}
\right].
\]
We conclude that
$$
Z_{\sigma}=-\frac{2}{n+2}\left( K_{\sigma\sigma\sigma}+\sum_{\tau\neq\sigma}\epsilon_{\sigma}\epsilon_{\tau}K_{\sigma\tau\tau}  \right).
$$
Moreover, the affine Blaschke metric is given by 
$$
h_{\sigma\tau}=\frac{1}{\phi}\cdot\frac{\partial^2f}{\partial x_{\sigma}\partial x_{\tau}}.
$$
At the origin $h_{\sigma\sigma}=\epsilon_{\sigma}$. The induced connection $\nabla$ is given by
$$
\nabla_{X_{\sigma}}X_{\tau}= -\frac{1}{\phi}\cdot\frac{\partial^2f}{\partial x_{\sigma}\partial x_{\tau}}\left(Z_1X_1+Z_2X_2+...+Z_nX_n \right),
$$
where $X_{\sigma}=\frac{\partial}{\partial x_{\sigma}}$. Thus, at the origin, $\nabla_{X_{\sigma}}X_{\tau}=0$, if $\sigma\neq\tau$. Moreover,
$$
\nabla_{X_{\sigma}}X_{\sigma}=-\epsilon_{\sigma}\left(Z_1X_1+Z_2X_2+...+Z_nX_n \right).
$$

\bigskip\noindent
We can now calculate the cubic form:
$$
C_{\sigma\sigma\sigma}=\frac{\partial h_{\sigma\sigma}}{\partial x_{\sigma}}-2h\left(\nabla_{X_{\sigma}}X_{\sigma},X_{\sigma}  \right)=-\epsilon_{\sigma}\phi_{x_{\sigma}}+2K_{\sigma\sigma\sigma}+2Z_{\sigma}=2K_{\sigma\sigma\sigma}+3Z_{\sigma}
$$
$$
=-\frac{6}{n+2}\left( K_{\sigma\sigma\sigma}+\sum_{\tau}\epsilon_{\sigma}\epsilon_{\tau}K_{\sigma\tau\tau}  \right)+2K_{\sigma\sigma\sigma}=\frac{2(n-1)}{n+2}K_{\sigma\sigma\sigma}-\frac{6}{n+2}\left( \sum_{\tau}\epsilon_{\sigma}\epsilon_{\tau}K_{\sigma\tau\tau} \right).
$$

\bigskip\noindent
Moreover, at the origin
$$
C_{\sigma\sigma\tau}=\frac{\partial h_{\sigma\sigma}}{\partial x_{\tau}}-2h\left(\nabla_{X_{\tau}}X_{\sigma},X_{\sigma}  \right)=-\epsilon_{\sigma}\phi_{x_{\tau}}+2K_{\sigma\sigma\tau}=\epsilon_{\sigma}\epsilon_{\tau}Z_{\tau}+2K_{\sigma\sigma\tau}
$$
$$
=-\frac{2\epsilon_{\sigma}\epsilon_{\tau}}{n+2}\left( K_{\tau\tau\tau}+\sum_{\rho}\epsilon_{\rho}\epsilon_{\tau}K_{\rho\rho\tau}\right)+2K_{\sigma\sigma\tau}
$$
$$
=\frac{2(n+1)}{n+2}K_{\sigma\sigma\tau}-\frac{2\epsilon_{\sigma}\epsilon_{\tau}}{n+2}K_{\tau\tau\tau}
-\frac{2\epsilon_{\sigma}}{n+2}\sum_{\rho}\epsilon_{\rho}K_{\rho\rho\tau}.
$$

\bigskip\noindent
Finally, at the origin,
$$
C_{\sigma\tau\rho}=\frac{\partial h_{\sigma\tau}}{\partial x_{\rho}}-h\left(\nabla_{X_{\rho}}X_{\sigma},X_{\tau}  \right)-h\left(\nabla_{X_{\rho}}X_{\tau},X_{\sigma}  \right)=2K_{\sigma\tau\rho},
$$
thus completing the proof of the proposition.

\end{document}